\newtheorem{theorem}{Theorem}[section]
\newtheorem{lemma}[theorem]{Lemma}
\newtheorem{remark}{Remark}[section]
\newcommand{\R}{\mathbb{R}}
\newcommand{\norm}[1]{{\left\lVert{#1}\right\rVert}}
\newcommand{\skpr}[1]{\langle #1 \rangle}
\newcommand{\OC}{\operatorname{\mathsf{C}}}
\newcommand{\OD}{\operatorname{\mathsf{D}}}
\newcommand{\OS}{\operatorname{\mathsf{S}}}
\newcommand{\OT}{\operatorname{\mathsf{T}}}
\newcommand{\OcT}{\operatorname{\mathcal{T}}}
\begin{document}
\title{Paving the way to a $\OT$-coercive method\\
for the wave equation}
\author{Daniel Hoonhout$^\ast$, Richard L\"oscher$^\dagger$,  Carolina Urz\'{u}a-Torres$^\ast$\\ 
$^\ast$
  Delft Institute of Applied Mathematics, TU Delft,\\
   Mekelweg 4, 2628CD Delft,
  The Netherlands \\ d.m.hoonhout@tudelft.nl, c.a.urzuatorres@tudelft.nl\\
  $^\dagger$Institute of Applied Mathematics, TU Graz,\\
 Steyrergasse 30, 8010 Graz,
  Austria \\ loescher@math.tugraz.at\\
}
\maketitle

\abstract{
In this paper, we take a first step toward introducing a space-time transformation operator $\OT$ that establishes $\OT$-coercivity for the weak variational formulation of the wave equation in space and time on bounded Lipschitz domains. As a model problem, we study the ordinary differential equation (ODE)
$u'' + \mu u = f$ for $\mu>0$, 
which is linked to the wave equation via a Fourier expansion in space. For its weak formulation, we introduce a transformation operator $\OT_\mu$ that establishes 
$\OT_\mu$-coercivity of the bilinear form yielding an 
unconditionally stable Galerkin-Bubnov formulation with error estimates independent of $\mu$.
The novelty of the current approach is the explicit dependence of the transformation on 
$\mu$ which, when extended to the framework of partial differential equations, yields an operator acting in both time and space. We pay particular attention to keeping the trial space as a standard Sobolev space, simplifying the error analysis, while only the test space is modified. The theoretical results are complemented by numerical examples.  
}

\section{Introduction}
There has been an increased interest in finding space-time formulations for 
the wave equation that lead to unconditionally stable discretizations. Some of 
them rely on analyzing the exact mapping properties of the space-time wave 
operator in a variational setting 
\cite{FuehrerGonzalesKarkulik:2025,HLSU:2023,KoetheLoescherSteinbach:2023,SteinbachZank:2020}, 
including weak and least squares formulations. These often lead to a setting 
beyond standard Sobolev spaces, that is cumbersome to discretize. Others though, 
stay in the framework of Sobolev spaces, adding stabilization terms 
\cite{FraschiniLoliMoiolaSangalli:2024,SteinbachZank:2020a}, or applying special 
test functions \cite{BignardiMoiola:2025,FerrariPerugiaZampa:2025,SteinbachZank:2020}. 
The overall goal of all these approaches is to open the door to adaptivity in 
space and time simultaneously and without additional computational cost. 

In this paper, we follow the idea from \cite{SteinbachZank:2020} and consider a 
second order ordinary differential equation (ODE) $u''+\mu u =f$ with $\mu>0$. 
This ODE can be linked to the wave equation and used to find a transformation 
operator such that one obtains a coercive and continuous bilinear form for the 
corresponding variational formulation. However, unlike previous approaches, we 
take pains to make sure the resulting transformation also depends on $\mu$, and 
thus, when extended to the partial differential equation framework, can lead to 
a transformation that not only acts on time, but also on space. Moreover, we are 
interested in keeping standard Sobolev spaces as our solution spaces, such that 
the error analysis of future space-time methods and understanding of the solution 
is straightforward. With this purpose in mind, we deliberately insist on using 
the arising transformation to change the test space. Interestingly, this results 
in a variational formulation with a simple bilinear form, while only the 
right-hand side becomes more involved.

This contribution focuses on the key ideas for finding this transformation and 
therefore restricts attention to the ODE case. Ongoing and future work deals 
with the extension to the actual wave equation.

\section{Motivation}
Let $T>0$, $\mu>0$ and $f:(0,T)\to \R$ be given and let us consider the ordinary
differential equation to find $u:[0,T]\to\R$ such that 
\begin{equation}\label{eq:ODE-wave}
    \begin{aligned}
        u''(t) + \mu u(t) &= f(t)\quad \text{ for }t\in (0,T),\qquad
        u(0)=u'(0)=0. 
    \end{aligned}
\end{equation}
For the weak form, we multiply by 
a smooth function $v$ and integrate by parts to get
\begin{align*}
    \int_0^T \big[u''(t)v(t)+\mu u(t)v(t)\big]\, dt 
    & = \int_0^T \big[-u'(t)v'(t) + \mu u(t)v(t)\big]\, dt +u'(T)v(T).
\end{align*}
Now, in order to get rid of the term $u'(T)$, about which we have no information, 
and to ensure well-defined integrals, we need the spaces
\begin{align*}
H^1_{0,}(0,T) :=\{ v \in H^1(0,T)|\, v(0)=0\},\qquad
H^1_{,0}(0,T) :=\{ v \in H^1(0,T)|\, v(T)=0\}.
\end{align*}

With these, we consider the variational formulation: For $f\in [H^1_{,0}(0,T)]'$, 
find $u\in H^1_{0,}(0,T)$ such that for all $v\in H_{,0}^1(0,T)$
\begin{equation}\label{eq:VF-ODE-wave}
    b_\mu(u,v) := \int_0^T\big[-u'(t)v'(t)+\mu u(t)v(t)\big]\, dt = \int_0^T 
    f(t)v(t)\, dt. 
\end{equation}  
\begin{theorem}{{\cite[Lemma 4.2.1, Lemmata 4.2.3--4.2.4]{Zank:2020}}}\label{thm:H1-setting}
$\:$ The bilinear form\\ $b_\mu:H_{0,}^1(0,T)\times H_{,0}^1(0,T)\to \R$ defined 
in \eqref{eq:VF-ODE-wave} satisfies 
\begin{description}
    \item[(B1)] Boundedness: For all $u \in H^1_{0,}(0,T)$ and $v\in H_{,0
    }^1(0,T)$ it holds that
    \begin{equation*}
        |b_\mu(u,v)|\leq \left(1+\frac{4T^2\mu}{\pi^2}\right)\norm{u'}_{L^2
        (0,T)}\norm{v'}_{L^2(0,T)}. 
    \end{equation*} 
    \item[(B2)] Bounded invertibility: For all $u\in H_{0,}^1(0,T)$ it holds 
    that 
    \begin{equation*}
        \frac{2}{2+T\sqrt{\mu}}\norm{u'}_{(0,T)}\leq \sup_{0\neq v\in H_{
        ,0}^1(0,T)}\frac{b_\mu(u,v)}{\norm{v'}_{(0,T)}}. 
    \end{equation*}
    \item[(B3)] Surjectivity: For all $0\neq v\in H_{,0}^1(0,T)$ there exists 
    $u_v\in H_{0,}^1(0,T)$ such that $ b_\mu(u_v,v) \neq 0. $
\end{description}
\end{theorem}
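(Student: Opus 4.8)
The plan is to verify the three conditions separately, treating \textbf{(B1)} as routine, \textbf{(B2)} as the crux, and \textbf{(B3)} as a short consequence of the construction used for \textbf{(B2)}.

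For \textbf{(B1)} I would apply the Cauchy--Schwarz inequality to each of the two terms in \eqref{eq:VF-ODE-wave} and then invoke the sharp Poincar\'e--Friedrichs inequality $\norm{w}_{L^2(0,T)}\le \tfrac{2T}{\pi}\norm{w'}_{L^2(0,T)}$, valid for every $w\in H^1(0,T)$ vanishing at one endpoint (the constant being the inverse square root of the first eigenvalue $\big(\tfrac{\pi}{2T}\big)^2$ of $-\partial_{tt}$ with one Dirichlet and one Neumann condition). Applying it to $u\in H^1_{0,}(0,T)$ and to $v\in H^1_{,0}(0,T)$ turns $\mu\skpr{u,v}$ into $\tfrac{4T^2\mu}{\pi^2}\norm{u'}\norm{v'}$, which together with $\abs{\skpr{u',v'}}\le\norm{u'}\norm{v'}$ gives exactly the stated bound.

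For \textbf{(B2)} I would pass to the Riesz supremizer: for fixed $u\in H^1_{0,}(0,T)$ let $z\in H^1_{,0}(0,T)$ be defined by $\skpr{z',v'}=b_\mu(u,v)$ for all $v\in H^1_{,0}(0,T)$, so that the supremum in \textbf{(B2)} equals $\norm{z'}_{L^2(0,T)}$ and the claim reduces to the lower bound $\norm{z'}\ge \tfrac{2}{2+T\sqrt\mu}\norm{u'}$. Testing the defining identity against smooth functions and integrating by parts (using $v(T)=0$ while $v(0)$ stays free, which produces a natural condition at $t=0$) identifies $z$ as the solution of $-z''=u''+\mu u$ with $z(T)=0$ and $z'(0)=-u'(0)$; integrating once yields the explicit formula $z'(t)=-u'(t)-\mu\int_0^t u(s)\,ds$. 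Writing $U(t)=\int_0^t u(s)\,ds$, I would then expand
\[
\norm{z'}^2=\norm{u'}^2+2\mu\big(u(T)U(T)-\norm{u}_{L^2}^2\big)+\mu^2\norm{U}_{L^2}^2
\]
and bound the right-hand side from below.

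The main obstacle is precisely this last lower bound, since the cross term is indefinite: the boundary contribution $u(T)U(T)$ and the negative term $-\norm{u}_{L^2}^2$ can each push $\norm{z'}^2$ below $\norm{u'}^2$, so a crude reverse triangle inequality only succeeds for small $\mu$ and cannot reproduce a $\mu$-uniform constant. The fix is to balance these terms against $\norm{u'}^2$ and the nonnegative $\mu^2$-term by Cauchy--Schwarz and the Poincar\'e inequality above, carefully retaining the boundary value $u(T)$; the dimensionless group $T\sqrt\mu$ then emerges from an AM--GM/completion-of-squares step, producing the factor $\tfrac{2}{2+T\sqrt\mu}$. An equivalent and arguably cleaner bookkeeping is spectral: expanding $u$ and $v$ in the frequency-$\omega_k=(k-\tfrac12)\pi/T$ sine and cosine bases of $H^1_{0,}(0,T)$ and $H^1_{,0}(0,T)$, the $\mu$-coupling splits into a symmetric positive part (indices of equal parity) and an antisymmetric part (opposite parity) organized by the modified Hilbert transform $\mathcal{H}_T$, so that the inf-sup constant is the smallest singular value of the coupling operator. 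One sees directly that a single-mode test function yields only a mode-dependent, decaying ratio, whence the true supremizer must mix frequencies --- which is exactly why the estimate is delicate.

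Finally, \textbf{(B3)} follows from the same construction: for fixed $v\ne 0$ I would exhibit an explicit $u_v\in H^1_{0,}(0,T)$ with $b_\mu(u_v,v)\ne 0$, for instance by choosing $u_v$ to excite a single nonvanishing Fourier mode of $v$, so that $b_\mu(\cdot,v)$ is not the zero functional. With \textbf{(B1)}--\textbf{(B3)} in hand, the Banach--Ne\v{c}as--Babu\v{s}ka theorem yields existence, uniqueness and stability of the solution to \eqref{eq:VF-ODE-wave} with the stated $\mu$-explicit bounds.
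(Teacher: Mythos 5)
The paper itself offers no proof of this theorem---it is imported verbatim from the cited thesis [Zank:2020, Lemmata 4.2.1, 4.2.3--4.2.4]---so your attempt can only be measured against that source and against correctness. Your (B1) is complete and correct: Cauchy--Schwarz on both terms plus the sharp Poincar\'e inequality $\norm{w}_{L^2(0,T)}\le\frac{2T}{\pi}\norm{w'}_{L^2(0,T)}$ for functions vanishing at one endpoint gives exactly the stated constant. Your reduction of (B2) is also exact: the supremum equals $\norm{z'}_{L^2(0,T)}$ for the Riesz representer $z$, and your formula $z'=-u'-\mu U$ with $U(t)=\int_0^t u(s)\,ds$ is right (one checks directly that $\skpr{z',v'}=-\skpr{u',v'}+\mu\skpr{u,v}$ using $v(T)=0$ and $U(0)=0$). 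Hence (B2) is equivalent to the inequality $\norm{u'+\mu U}_{L^2(0,T)}\ge\frac{2}{2+T\sqrt{\mu}}\norm{u'}_{L^2(0,T)}$.

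This is where the genuine gap lies: you never prove that inequality. You expand the square, observe correctly that the cross term $2\mu\big(u(T)U(T)-\norm{u}_{L^2(0,T)}^2\big)$ is indefinite, and then assert that Cauchy--Schwarz, Poincar\'e and an ``AM--GM/completion-of-squares step'' will ``produce'' the factor $\frac{2}{2+T\sqrt{\mu}}$; your alternative spectral sketch likewise stops at the observation that the supremizer must mix frequencies. But producing that $\mu$-uniform constant \emph{is} the entire content of the lemma, and no completion of squares is exhibited. For comparison, one way to actually finish from your reduction: set $g=u'+\mu U$; since $U''+\mu U=g$ with $U(0)=U'(0)=0$, Duhamel gives $u'(t)=g(t)-\sqrt{\mu}\int_0^t\sin(\sqrt{\mu}(t-s))g(s)\,ds$, so everything hinges on an $L^2$-bound for the sine-kernel Volterra operator; the elementary estimate $\norm{\int_0^{t}\sin(\sqrt{\mu}(t-s))g(s)\,ds}_{L^2(0,T)}\le\frac{T}{\sqrt{2}}\norm{g}_{L^2(0,T)}$ only yields the weaker constant $\frac{\sqrt{2}}{\sqrt{2}+T\sqrt{\mu}}$, and sharpening $T/\sqrt{2}$ to $T/2$ is precisely the nontrivial step that is missing. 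Finally, (B3) as proposed does not work: the bases $\{\sin(\omega_k t)\}$ of $H^1_{0,}(0,T)$ and $\{\cos(\omega_k t)\}$ of $H^1_{,0}(0,T)$ are not bi-orthogonal under $b_\mu$ (the pairings $\skpr{\cos(\omega_j\cdot),\sin(\omega_k\cdot)}$ do not vanish for $j\ne k$), so testing with a single mode produces a linear combination of \emph{all} Fourier coefficients of $v$, which you do not show to be nonzero. A correct two-line argument: given $0\ne v\in H^1_{,0}(0,T)$, let $u_v$ solve $u_v''+\mu u_v=v$ with $u_v(0)=u_v'(0)=0$; integration by parts (using $v(T)=0$ and $u_v'(0)=0$) gives $b_\mu(u_v,v)=\norm{v}_{L^2(0,T)}^2>0$.
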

The above theorem guarantees that the weak formulation 
\eqref{eq:VF-ODE-wave} admits a unique solution $u\in H_{0,}^1(0,T)$ for all 
right-hand sides $f\in[H_{,0}^1(0,T)]'$ and fixed $\mu>0$. But, the obtained 
bounds depend on $\mu>0$ and, in particular, they degenerate 
when $\mu\to\infty$. 

Our main contribution is to propose a formulation where we keep the 
solution $u\in H^1_{0,}(0,T)$ in the standard Sobolev space but 
get $\mu$-independent bounds. For this, we introduce
\begin{equation*}
    \norm{w}_{H^1_\mu} := \sqrt{\norm{w'}_{(0,T)}^2+\mu\norm{w}_{(0,T)}^2},
\end{equation*}
which resembles the space-time $H^1$-norm, and defines an equivalent norm on 
$H^1_{0,}(0,T)$ and $H^1_{,0}(0,T)$. In what follows, we will construct a 
transformation operator $\OT_\mu:H^1_{0,}(0,T)\to H^1_{,0}(0,T)$ such that the 
bilinear form satisfies
\begin{equation*}
    \norm{u}_{H^1_\mu}^2 = b_\mu(u,\OT_\mu u). 
\end{equation*} 

\section{A space-time transformation!? Well, it's complex...}
In the following we slightly abuse notation and use $\skpr{\cdot,\cdot}$ 
to denote both the $L^2$-inner product and the $L^2$-duality pairing. 
Now, using integration by parts, we get
\begin{equation}\label{eq:ibp}
 \skpr{u',v} + \skpr{u,v'}=0, \qquad \forall 
 u\in H^1_{0,}(0,T), \forall v\in H^1_{,0}(0,T).
\end{equation}
We introduce the operators 
\begin{equation*}
    \OD^\pm := \pm i\partial_t +\sqrt{\mu}, 
\end{equation*}
where $i$ denotes the imaginary unit. Using \eqref{eq:ibp}, we have that 
\begin{equation*}
\skpr{\OD^+ u,\OD^+v} = -\skpr{u',v'} + \mu\skpr{u,v} + i\sqrt{\mu} 
(\skpr{u',v} + \skpr{u,v'}) = b_\mu(u,v). 
\end{equation*}
Moreover, note that for any real valued function $u\in H^1_{0,}(0,T)$
\begin{align*}
\skpr{\OD^+ u,\OD^- u} &= \skpr{\OD^+ u ,\overline{\OD^+ u}} = \norm{\Re(\OD^+ u)}_{L^2(0,T)}^2+\norm{\Im(\OD^+ u)}_{L^2(0,T)}^2\\
&= \mu\norm{u}_{L^2(0,T)}^2+\norm{u'}_{L^2(0,T)}^2 = \norm{u}_{H^1_\mu}^2. 
\end{align*}
Here, for any function $w$, $\Re(w)$ and $\Im(w)$ denote its real and 
imaginary parts, respectively, while $\overline{w}$ corresponds to its 
complex conjugate.

The above motivates us to introduce the transformation operator 
\begin{equation}\label{eq:transformation1}
\OcT_\mu:=\left(\OD^{+}\right)^{-1}\OD^-,
\end{equation}
since for $v = \OcT_\mu u$ we then formally have that
\begin{equation*}
    b_\mu(u,v) = \skpr{\OD^+ u,\OD^+\OcT_\mu u} = \skpr{\OD^+ u,
    \OD^- u} = \norm{u}_{H^1_\mu}^2. 
\end{equation*}

\subsection{The transformation operator: Its actual closed form formula}
We proceed to compute the transformation operator defined in 
\eqref{eq:transformation1} acting on a function in $H^1_{0,}(0,T)$.
Moreover, in order to stay consistent, we would like that the operator 
$\OcT_\mu$ maps functions from $H^1_{0,}(0,T)$ to functions with zero terminal 
condition. 

As the solution of 
\begin{equation*}
    \OD^+ z := iz'+\sqrt{\mu}z = q,\, \text{in }(0,T),\quad z(T) = 0,
\end{equation*}
can be explicitly computed to be
\begin{equation*}
    z(t) = \left(\left(\OD^{+}\right)^{-1} q\right)(t) = i\int_t^T e^{i\sqrt{
    \mu}(t-s)}q(s)\, ds,
\end{equation*}
we have that for any $w\in H^1_{0,}(0,T)$
\begin{align}
\label{eq:complexForm}
    v(t) &= (\OcT_\mu w)(t) = \left(\left(\OD^+\right)^{-1}\OD^-w\right)(t)
    \nonumber\\ 
    &= i\int_t^T e^{i\sqrt{\mu}(t-s)}(-iw'(s)+\sqrt{\mu}w(s))\, ds.
\end{align}

\subsection{A convenient REALization}

We are only interested in testing with real valued functions $v$. However, for 
any real valued function $w\in H^1_{0,}(0,T)$, we see from \eqref{eq:complexForm} 
that $\OcT_\mu w$ is complex valued. 
To circumvent the use of complex functions, we now consider only the real part 
of the transformation operator and define for $w\in H^1_{0,}(0,T)$
\begin{equation*}
\OT_\mu w(t) \hspace{-0.2em}:= \Re(\OcT_\mu \hspace{-0.2em} w(t)) = \int_t^T\hspace{-0.2em}\left[\cos(\sqrt{\mu}(t-s))w'(s) 
-\sqrt{\mu}\sin(\sqrt{\mu}(t-s))w(s)\right]\,\hspace{-0.2em} ds.
\end{equation*}
\begin{lemma}\label{lem:properties of T_mu real}
The operator $\OT_\mu:H^1_{0,}(0,T)\to H^1_{,0}(0,T)$ is well-defined, bounded and
satisfies for all $u\in H^1_{0,}(0,T)$ and all $q\in L^2(0,T)$
\begin{equation*}
    \int_0^T \left[-(\OT_\mu u)'(t) +\mu \hskip -0.4em \int_t^T \hskip -0.4em 
    (\OT_\mu u)(s)\, ds\right] q(t)\, dt = \int_0^T \left[u'(t) + \mu \hskip 
    -0.4em \int_t^T \hskip -0.4em u(s)\, ds\right] q(t)\, dt. 
\end{equation*}
\end{lemma}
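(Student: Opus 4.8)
The plan is to base everything on the first-order identity built into the definition $\OcT_\mu=(\OD^+)^{-1}\OD^-$, so that essentially no computation with the trigonometric kernel is needed. First I would record regularity and boundary behaviour. For real $u\in H^1_{0,}(0,T)$ one has $\OD^- u=-iu'+\sqrt\mu u\in L^2(0,T)$, and the variation-of-constants formula $z(t)=i\int_t^T e^{i\sqrt\mu(t-s)}q(s)\,ds$ defines a bounded map $(\OD^+)^{-1}\colon L^2(0,T)\to H^1(0,T)$ solving $\OD^+ z=q$ a.e. with $z(T)=0$ (indeed $z$ is absolutely continuous and $iz'=q-\sqrt\mu z\in L^2$). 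Composing, $\OcT_\mu u\in H^1(0,T;\C)$, it satisfies the terminal condition $\OcT_\mu u(T)=0$, and it obeys $\OD^+(\OcT_\mu u)=\OD^- u$ a.e. Taking real parts shows $\OT_\mu u=\Re(\OcT_\mu u)\in H^1(0,T)$ with $\OT_\mu u(T)=0$, i.e. $\OT_\mu u\in H^1_{,0}(0,T)$, which settles well-definedness; boundedness then follows from $\norm{\OD^- u}_{L^2}\le\norm{u'}_{L^2}+\sqrt\mu\,\norm{u}_{L^2}$ together with the boundedness of $(\OD^+)^{-1}$ and of taking the real part.

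For the central identity I would split $\OcT_\mu u=v_R+iv_I$ with $v_R:=\OT_\mu u$ and $v_I:=\Im(\OcT_\mu u)$, both in $H^1(0,T)$ and both vanishing at $t=T$. Writing out the real and imaginary parts of $\OD^+(\OcT_\mu u)=\OD^- u$ (recall $u$ is real) yields the coupled first-order system $\sqrt\mu\,v_R-v_I'=\sqrt\mu\,u$ and $v_R'+\sqrt\mu\,v_I=-u'$ a.e. I would then integrate the first equation from $t$ to $T$ and use $v_I(T)=0$ to solve for $v_I$, obtaining $v_I(t)=\sqrt\mu\int_t^T(u-v_R)(s)\,ds$, hence $\sqrt\mu\,v_I(t)=\mu\int_t^T(u-\OT_\mu u)(s)\,ds$. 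Substituting this into the second equation and rearranging gives exactly $-(\OT_\mu u)'(t)+\mu\int_t^T(\OT_\mu u)(s)\,ds=u'(t)+\mu\int_t^T u(s)\,ds$ a.e. on $(0,T)$.

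Finally, both sides of this pointwise identity are $L^2(0,T)$ functions of $t$ — the derivative terms because $\OT_\mu u,u\in H^1$, the integral terms because $t\mapsto\int_t^T w(s)\,ds$ is continuous whenever $w\in L^2$ — so multiplying by an arbitrary $q\in L^2(0,T)$ and integrating over $(0,T)$ produces the claimed weak identity.

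I expect the main obstacle to be regularity bookkeeping rather than any hard estimate: one must ensure that $\OcT_\mu u$ genuinely lies in $H^1$ (so that $v_R,v_I$ admit $L^2$ derivatives and the Fundamental Theorem of Calculus applies with the terminal values $v_R(T)=v_I(T)=0$), and that $\OD^+(\OcT_\mu u)=\OD^- u$ holds in the a.e./$L^2$ sense for merely $H^1$ data $u$, not just formally. Once this is secured, the decomposition into the first-order system makes the argument essentially algebraic and avoids differentiating the kernel twice; a direct verification straight from the closed form of $\OT_\mu u$ is also possible but leads to cumbersome double integrals that I would rather sidestep.
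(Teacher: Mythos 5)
Your proof is correct, and it takes a genuinely different route from the paper. The paper works directly from the closed-form trigonometric kernel: it differentiates the explicit formula for $\OT_\mu u$, upgrades this to the second-order distributional identity $(\OT_\mu u)'' + \mu \OT_\mu u = -u'' + \mu u$ in $[H^1_{0,}(0,T)]'$, tests against $w\in H^1_{0,}(0,T)$ via the integration-by-parts formula \eqref{eq:ibp}, and finally uses the representation $w(t)=\int_0^t q(s)\,ds$ together with the Fubini-type swap \eqref{eq:integration by parts for integrals} to land on the claimed weak identity. You instead never differentiate the kernel: you exploit the defining first-order relation $\OD^+(\OcT_\mu u)=\OD^- u$, split into the real/imaginary system $\sqrt\mu\,v_R - v_I' = \sqrt\mu\,u$, $v_R' + \sqrt\mu\,v_I = -u'$, eliminate $v_I$ by integrating from $t$ to $T$, and obtain the identity \emph{pointwise a.e.}, after which pairing with $q\in L^2(0,T)$ is immediate. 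Your route buys three things: it stays at first order (no distributional second derivatives), it dispenses with the swap formula \eqref{eq:integration by parts for integrals} inside this proof, and it actually proves a stronger statement (a pointwise rather than weak identity). What the paper's route buys is the explicit quantitative bound \eqref{eq:bound}, $\norm{\OT_\mu u}_{H^1_{,0}(0,T)}\leq \bigl(1+T\sqrt{\mu/2}\bigr)\norm{u}_{H^1_\mu}$, read off from the differentiated kernel; this constant is invoked later in the proof of the main theorem when verifying continuity of the right-hand side for Lax--Milgram. Your argument establishes boundedness qualitatively (which is all the lemma asserts), but to be a drop-in replacement in the paper you would need to track constants through your bound on $(\OD^+)^{-1}$ and $\OD^-$ to recover an estimate of the form \eqref{eq:bound}, which is a routine but not omissible supplement.
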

\begin{proof}
By construction $(\OT_\mu u)(T) =0$ and we compute
\begin{equation*}
(\OT_\mu u)'(t) = -u'(t) - \sqrt{\mu}\int_t^T \left[\sin(\sqrt{\mu}(t-s))u'(s) 
+ \sqrt{\mu}\cos(\sqrt{\mu}(t-s))u(s)\right]\, ds. 
\end{equation*}
For fixed $\mu$, all the terms on the right-hand side are bounded in $L^2(0,T)$ 
if $u\in H^1_{0,}(0,T)$, and thus $\OT_\mu$ is well defined. Using a triangle and 
Cauchy--Schwarz inequalities, we also get the bound
\begin{equation}\label{eq:bound}
 \norm{\OT_\mu u}_{H^1_{,0}(0,T)}\leq \left(1+T\sqrt{\frac{\mu}{2}}\right)\norm{u}_{H^{1}_\mu}, 
 \quad u\in H^1_{0,}(0,T).
\end{equation}
Furthermore, using that $(\OT_\mu u)'(T)
= -u'(T)$, we compute that 
\begin{equation*}
    (\OT_\mu u)'' + \mu \OT_\mu u = -u'' + \mu u \, \text{ in }[H^1_{0,}(0,T)]'. 
\end{equation*}
Hence, for all $w\in H^1_{0,}(0,T)$ we have
\begin{equation*}
-\skpr{(\OT_\mu u)',w'} + \mu \skpr{\OT_\mu u,w} = \skpr{u',
w'}+\mu \skpr{u,w}, 
\end{equation*}
where we applied the integration by parts formula \eqref{eq:ibp}.
Next, we use that each $w\in H^1_{0,}(0,T)$ admits the representation $w(t) = 
\int_0^t q(s)\, ds$ for some $q\in L^2(0,T)$. Thus, $w'=q$ and using the 
following integration by parts formula 
\begin{equation}\label{eq:integration by parts for integrals}
\int_0^T p(t) \int_0^t q(s)\, ds \, dt = \int_0^T \int_t^T p(s)\, ds \, q(t)\, 
dt,\quad \forall p,q\in L^2(0,T), 
\end{equation}
concludes the proof. 
\end{proof}
\begin{remark}
Note that, for $\mu=0$, $\OT_0 w(t) = w(T) - w(t) = \overline{\mathcal{H}
}_Tw(t)$, which was introduced in \cite[Lemma 4.5]{SteinbachZank:2020} as a 
purely temporal transformation for the stabilization of the wave equation. Hence, 
$\OT_\mu$ can be seen as an extension of $\overline{\mathcal{H}}_T$ to 
space-time. 
\end{remark}

\vspace{-2em}
\section{Always Look on the Right Side of Life}
Using the real transformation $\OT_\mu$, we are now considering the variational 
formulation: Given $f\in[H^1_{,0}(0,T)]'$, find $u\in H^1_{0,}(0,T)$ such that 
\begin{equation}\label{eq:VF transformation}
b_\mu(u,\OT_\mu w) = \skpr{f,\OT_\mu w},\quad \forall w\in H^1_{0,}(0,T).
\end{equation}
We first prove that its application on the second argument of $b_\mu$ does not 
introduce complicated expressions or additional terms.
\begin{lemma}\label{lem:properties bmu}
For all $u,w\in H^1_{0,}(0,T)$ it holds that
\begin{equation*}
    b_\mu(u,\OT_\mu w) = \skpr{u',w'} + \mu \skpr{u,w}.
\end{equation*}
In particular, $b_\mu$ is bounded and $\OT_\mu$-coercive, i.e., 
\begin{equation*}
    b_\mu(u,\OT_\mu w)\leq \norm{u}_{H^1_\mu}\norm{w}_{H^1_\mu},\quad 
    \text{and}\quad b_\mu(u,\OT_\mu u) = \norm{u}_{H^1_\mu}^2. 
    \end{equation*}
\end{lemma}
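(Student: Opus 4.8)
The plan is to reduce the asserted identity to the one already established in Lemma~\ref{lem:properties of T_mu real}, taking as test function the square-integrable function $q=u'$. First I would expand the left-hand side by the definition of $b_\mu$, writing $b_\mu(u,\OT_\mu w)=-\skpr{u',(\OT_\mu w)'}+\mu\skpr{u,\OT_\mu w}$, and then try to rewrite both summands so that each is paired against $u'$ rather than against $u$. The stiffness term already has this shape. The mass term $\mu\skpr{u,\OT_\mu w}$ does not, so the key move is to exploit $u(0)=0$ to write $u(t)=\int_0^t u'(s)\,ds$ and apply the integration-by-parts formula~\eqref{eq:integration by parts for integrals} with $p=\OT_\mu w$ and $q=u'$. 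This turns $\mu\skpr{u,\OT_\mu w}$ into $\mu\int_0^T u'(t)\big[\int_t^T(\OT_\mu w)(s)\,ds\big]\,dt$.

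After this rewriting, the two terms combine into a single integral tested against $u'$, namely
\[
b_\mu(u,\OT_\mu w)=\int_0^T u'(t)\Big[-(\OT_\mu w)'(t)+\mu\!\int_t^T(\OT_\mu w)(s)\,ds\Big]\,dt.
\]
At this point I would invoke Lemma~\ref{lem:properties of T_mu real} (with its role of $u$ played by $w$, and with $q=u'\in L^2(0,T)$) to replace the bracketed expression by $w'(t)+\mu\int_t^T w(s)\,ds$. This yields $b_\mu(u,\OT_\mu w)=\skpr{u',w'}+\mu\int_0^T u'(t)\int_t^T w(s)\,ds\,dt$. To finish the identity I would undo the integration by parts, applying~\eqref{eq:integration by parts for integrals} once more with $p=w$ and $q=u'$; since $u(t)=\int_0^t u'(s)\,ds$, this recovers $\int_0^T u'(t)\int_t^T w(s)\,ds\,dt=\skpr{w,u}=\skpr{u,w}$, giving exactly $b_\mu(u,\OT_\mu w)=\skpr{u',w'}+\mu\skpr{u,w}$.

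The two remaining assertions then follow directly from the identity. For boundedness I would bound each summand by Cauchy--Schwarz, $\skpr{u',w'}+\mu\skpr{u,w}\le\norm{u'}_{L^2(0,T)}\norm{w'}_{L^2(0,T)}+\mu\norm{u}_{L^2(0,T)}\norm{w}_{L^2(0,T)}$, and then apply the discrete Cauchy--Schwarz inequality to this two-term sum, viewing it as the Euclidean inner product of $(\norm{u'}_{L^2(0,T)},\sqrt{\mu}\norm{u}_{L^2(0,T)})$ with the analogous vector built from $w$; this gives precisely $\norm{u}_{H^1_\mu}\norm{w}_{H^1_\mu}$. The coercivity statement is immediate by setting $w=u$, since the identity then reads $b_\mu(u,\OT_\mu u)=\norm{u'}_{L^2(0,T)}^2+\mu\norm{u}_{L^2(0,T)}^2=\norm{u}_{H^1_\mu}^2$.

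I expect the main (though modest) obstacle to be purely bookkeeping: matching the integral-remainder term $\int_t^T(\cdot)\,ds$ appearing in Lemma~\ref{lem:properties of T_mu real} with the structure produced by~\eqref{eq:integration by parts for integrals}, and correctly tracking which function plays the role of $p$ and $q$ in each of the two applications. The genuine idea is simply to recognize that the mass term must be transported onto $u'$ via this integration by parts so that the previously established identity becomes applicable; once that is seen, no further analytic estimates are needed and the coercivity constant is exactly $1$.
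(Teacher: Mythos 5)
Your proposal is correct and follows essentially the same route as the paper's own proof: writing $u(t)=\int_0^t u'(s)\,ds$, transporting the mass term onto $u'$ via the swap formula \eqref{eq:integration by parts for integrals}, invoking Lemma~\ref{lem:properties of T_mu real} with $q=u'$, and undoing the swap to recover $\mu\skpr{u,w}$. The paper leaves the boundedness estimate implicit, while you spell out the two applications of Cauchy--Schwarz, but this is a cosmetic difference only.
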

\begin{proof}
As $u\in H^1_{0,}(0,T)$ there exists $q\in L^2(0,T)$ such that $u(t) = \int_0^t 
q(s)\, ds.$ Using \eqref{eq:integration by parts for integrals} 
and Lemma \ref{lem:properties of T_mu real}, we then compute for all $w \in 
H^1_{0,}(0,T)$
\begin{align*}
    b_\mu(u,\OT_\mu w)& = \int_0^T \left[-u'(t)(\OT_\mu w)'(t) + \mu u(t)(\OT_\mu w)(t)\right]\, dt\\
    & = \int_0^T q(t)\left(-(\OT_\mu w)'(t) + \mu \int_t^T(\OT_\mu w)(s)\, ds\right)\, dt\\
    & = \int_0^T q(t)\left(w'(t) + \mu \int_t^T w(s)\, ds \right)\, dt \\
    & = \int_0^T u'(t)w'(t) + \mu u(t)w(t)\, dt. 
\end{align*}
\end{proof}
\noindent The main result is now the following. 
\begin{theorem}
The variational formulation \eqref{eq:VF transformation} admits a unique solution 
$u\in H^1_{0,}(0,T)$ for all $f\in[H^1_{,0}(0,T)]'$. In particular, for all
\vskip -1em 
\begin{equation*}
f\in\mathcal{F} := \{f\in [H^1_{,0}(0,T)]': \skpr{f,\OT_\mu w}\leq C\norm{w}_{H^1_\mu},\, \forall w\in H^1_{0,}(0,T)\}.
\end{equation*} 
\vskip -0.5em 
\noindent
with some $C>0$ independent of $\mu$, the solution $u$ satisfies the $\mu$-independent bound
\begin{equation}\label{eq:bound of solution f in F}
    \norm{u}_{H^1_\mu} \leq C. 
\end{equation}
\end{theorem}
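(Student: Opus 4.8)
The plan is to prove existence, uniqueness, and the conditional bound by combining Lemma~\ref{lem:properties bmu} with the equivalence of $\norm{\cdot}_{H^1_\mu}$ and the standard norm on $H^1_{0,}(0,T)$. First I would recall that Lemma~\ref{lem:properties bmu} gives us exactly $\OT_\mu$-coercivity, $b_\mu(u,\OT_\mu u) = \norm{u}_{H^1_\mu}^2$, and boundedness, $b_\mu(u,\OT_\mu w) \leq \norm{u}_{H^1_\mu}\norm{w}_{H^1_\mu}$, where both bounds are now completely $\mu$-independent. This is the whole payoff of the transformation: on the space $H^1_{0,}(0,T)$ equipped with the inner product $a(u,w) := \skpr{u',w'} + \mu\skpr{u,w}$ (whose induced norm is precisely $\norm{\cdot}_{H^1_\mu}$), the left-hand side of \eqref{eq:VF transformation} is just $a(u,w)$. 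So the variational problem \eqref{eq:VF transformation} reads: find $u \in H^1_{0,}(0,T)$ such that $a(u,w) = \skpr{f,\OT_\mu w}$ for all $w \in H^1_{0,}(0,T)$.

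Next I would establish that the right-hand side $w \mapsto \skpr{f,\OT_\mu w}$ is a bounded linear functional on $(H^1_{0,}(0,T), \norm{\cdot}_{H^1_\mu})$. Linearity is clear since $\OT_\mu$ is linear and the duality pairing is linear; for boundedness I would use that $f \in [H^1_{,0}(0,T)]'$ together with the boundedness estimate \eqref{eq:bound} from Lemma~\ref{lem:properties of T_mu real}, namely $\norm{\OT_\mu w}_{H^1_{,0}(0,T)} \leq (1+T\sqrt{\mu/2})\norm{w}_{H^1_\mu}$, which yields $|\skpr{f,\OT_\mu w}| \leq \norm{f}_{[H^1_{,0}]'}\,\norm{\OT_\mu w}_{H^1_{,0}} \leq C_{\mu,f}\norm{w}_{H^1_\mu}$ for a constant that may depend on $\mu$ and $f$ but is finite. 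This confirms the functional lies in the dual of the Hilbert space $(H^1_{0,}(0,T), a(\cdot,\cdot))$.

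With these two ingredients, existence and uniqueness follow immediately from the Riesz representation theorem (or equivalently Lax--Milgram): since $a$ is a symmetric, coercive, bounded bilinear form, it is an inner product inducing a norm equivalent to the $H^1$-norm on $H^1_{0,}(0,T)$, making the space complete; the bounded linear functional $w \mapsto \skpr{f,\OT_\mu w}$ is then represented by a unique $u \in H^1_{0,}(0,T)$, which is the sought solution. For the conditional $\mu$-independent bound, I would take $f \in \mathcal{F}$, set $w = u$ in \eqref{eq:VF transformation}, and chain the defining inequality of $\mathcal{F}$ with coercivity:
\begin{equation*}
\norm{u}_{H^1_\mu}^2 = b_\mu(u,\OT_\mu u) = \skpr{f,\OT_\mu u} \leq C\norm{u}_{H^1_\mu}.
\end{equation*}
Dividing by $\norm{u}_{H^1_\mu}$ (trivially handling $u=0$) yields \eqref{eq:bound of solution f in F}.

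The conceptually delicate point, which I would flag explicitly, is the interplay of two different constants. The well-posedness step only needs the functional to be bounded for \emph{fixed} $\mu$, and here the $\mu$-dependent factor in \eqref{eq:bound} is harmless. The $\mu$-\emph{independence} of the final bound does not come from \eqref{eq:bound} at all, but is instead \emph{built into} the definition of the admissible class $\mathcal{F}$, which postulates a uniform constant $C$ for the action of $f$ on $\OT_\mu w$ measured in the $\norm{\cdot}_{H^1_\mu}$-norm. Thus the main subtlety is not a hard estimate but recognizing that the two parts of the theorem draw on the right-hand side bound in two different regimes: the generic $[H^1_{,0}]'$ bound suffices for existence, whereas the sharper, $\mu$-uniform assumption encoded in $\mathcal{F}$ is exactly what propagates through coercivity to give the stability estimate \eqref{eq:bound of solution f in F}.
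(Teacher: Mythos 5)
Your proposal is correct and follows essentially the same route as the paper: Lemma~\ref{lem:properties bmu} for $\OT_\mu$-coercivity and boundedness, the fixed-$\mu$ estimate \eqref{eq:bound} to show the right-hand side is a bounded functional, Lax--Milgram (your Riesz-representation phrasing is an equivalent packaging, valid since the form is symmetric) for existence and uniqueness, and the identical chain $\norm{u}_{H^1_\mu}^2 = b_\mu(u,\OT_\mu u) = \skpr{f,\OT_\mu u}\leq C\norm{u}_{H^1_\mu}$ for the $\mu$-independent bound. Your explicit remark distinguishing the two regimes of the right-hand-side estimate (fixed-$\mu$ for well-posedness versus $\mu$-uniform via $\mathcal{F}$ for stability) is exactly the point the paper's proof makes implicitly.
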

\begin{proof}
By Lemma \ref{lem:properties bmu} the bilinear form $b_\mu$ is $\OT_\mu$-coercive 
and bounded. Moreover, for fixed $\mu>0$, using \eqref{eq:bound} we get 
\begin{equation*}
    \skpr{f,\OT_\mu w}\leq \norm{f}_{[H^1_{,0}(0,T)]'}\norm{\OT_\mu w}_{
    H^1_{,0}(0,T)} \leq \left(1+T\sqrt{\frac{\mu}{2}}\right)\norm{f}_{[H^1_{,0}(0,T)]'}\norm{w}_{H^1_\mu}. 
\end{equation*}
Hence, by the Lemma of Lax-Milgram, there exists a unique solution $u\in H^1_{0,}
(0,T)$ for all $f\in [H^1_{,0}(0,T)]'$.
The bound \eqref{eq:bound of solution f in F} for $f\in\mathcal{F}$ now follows from 
\begin{equation*}
\norm{u}_{H^1_\mu}^2 = b_\mu(u,\OT_\mu u) = \skpr{f,\OT_\mu u}\leq C
\norm{u}_{H^1_\mu}. 
\end{equation*}
\end{proof}
\begin{remark}
For $w\in H^1_{0,}(0,T)$, using \eqref{eq:integration by parts for integrals} and the Cauchy--Schwarz inequality, we get
\begin{align*}
    \skpr{f,\OT_\mu w} &= \int_0^T \left[w'(t)\OC(t,f)+ \sqrt{\mu}w(t)\OS(t,f)\right]\, dt\\
     &\leq \norm{w}_{H^1_\mu}\left(\int_0^T \left[\OC(t,f)^2+\OS(t,f)^2\right]\, dt\right)^{1/2}
\end{align*}
where $\OC(t,f) = \int_0^t \cos(\sqrt{\mu}(t-s))f(s)\, ds$ and $\OS(t,f) = \int_0^t \sin(\sqrt{\mu}(t-s))f(s)\, ds$.
For $f\in L^2(0,T)$ we can now proceed as in \cite[Lemma 4.9]{SteinbachZank:2020}, to compute 
\begin{equation*}
    \int_0^T \left[\OC(t,f)^2+\OS(t,f)^2\right]\, dt \leq \frac{T^2}{2}\norm{f}_{(0,T)}^2,
\end{equation*}
showing that $L^2(0,T) \subset \mathcal{F}$ and \eqref{eq:bound of solution f in F} 
holds with $C = \frac{T}{\sqrt{2}}\norm{f}_{(0,T)}$. 
\end{remark}
\vspace{-2em}
\section{Discretization}
We consider the conforming trial space $X_h := S_{h}^1(0,T)\cap H^1_{0,}(0,T)$ 
of piecewise linear finite elements  defined on a uniform decomposition of the 
interval $(0,T)$ into $N\in\mathbb{N}$ elements of mesh size $h=T/N$. The 
discrete variational formulation reads: Given $f\in[H^1_{,0}(0,T)]'$, find $u_h\in 
X_h$ such that 
\begin{equation}\label{eq:DVF transformation}
b_\mu(u_h,\OT_\mu w_h) = \skpr{u_h',w_h'} + \mu \skpr{u_h,w_h} 
= \skpr{f,\OT_\mu w_h},\, \forall w_h\in X_h. 
\end{equation}
Due to the $\OT_\mu$-coercivity, \eqref{eq:DVF transformation} admits a 
unique solution $u_h\in X_h$. By Lemma \ref{lem:properties bmu} and 
the linearity of $\OT_\mu$ we have Galerkin orthogonality and immediately derive 
Cea's Lemma and thus the best-approximation error estimate for $u\in H^s(0,T)$
\begin{equation*}
    \norm{u-u_h}_{H^1_\mu} \leq \inf_{w_h\in X_h}\norm{u-w_h}_{H^1_\mu}\leq c(h^{s-1} + \mu h^s)|u|_{H^{s}}, \,s\in [1,2].  
\end{equation*}    
As an illustrative example, we consider $T=1$, and the function 
\begin{equation*}
    u(t) = t^2(T-t)^{3/4}\in H^s(0,T),\, s<\frac{5}{4}
\end{equation*}
solving \eqref{eq:ODE-wave} for $f \in H^{-\sigma}(0,T)$, $\sigma>\tfrac{3}{4}$. The solutions for $\mu\in\{1,10^5\}$ are depicted in Figure \ref{fig:solutions}, and the convergence rates are listed in Table \ref{tab:error_eoc_mu_comparison}. We see, that the method is stable w.r.t. to $\mu$ and gives optimal orders of convergence in $H^1$ right from the start. 

\begin{figure}[htbp!]
    \centering
    \begin{subfigure}{0.45\textwidth}
        \centering
        \includegraphics[width=\textwidth]{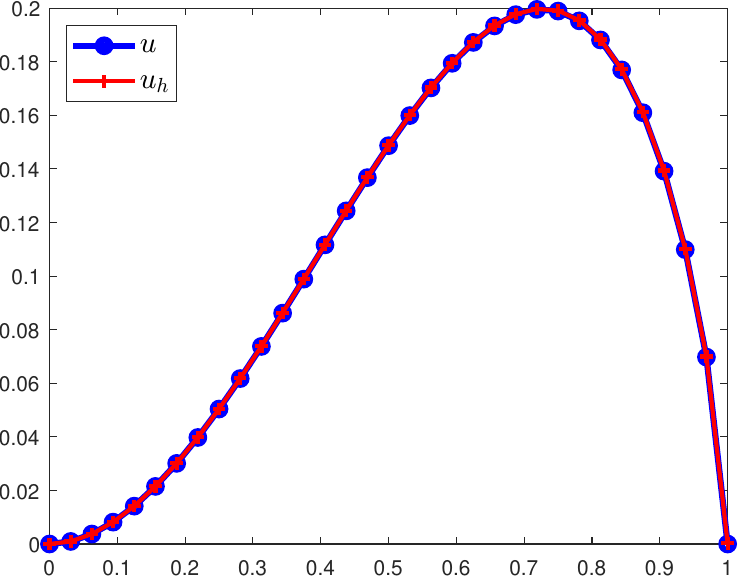}
        \caption{$\mu=1$}
    \end{subfigure}
    \begin{subfigure}{0.45\textwidth}
        \centering
        \includegraphics[width=\textwidth]{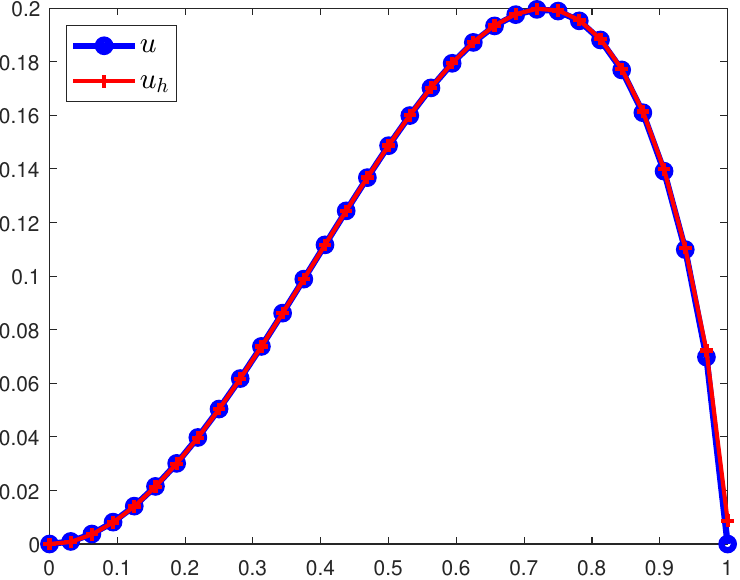}
        \caption{$\mu=10^5$}
    \end{subfigure}
    \caption{Exact solution $u$ and reconstruction $u_h$ for different $\mu$ on $N=32$ elements.}
    \label{fig:solutions}
\end{figure}

\begin{table}[ht]
\centering
\begin{tabular}{cc|cc|cc|cc|cc}
\toprule
& & \multicolumn{4}{c|}{$\mu=1$} & \multicolumn{2}{c|}{$\mu=1000$} & \multicolumn{2}{c}{$\mu=10^5$} \\
$N$ & $h$ 
& $\norm{u-u_h}_{L^2}$ & eoc 
& $|u-u_h|_{H^1}$ & eoc 
& $|u-u_h|_{H^1}$ & eoc 
& $|u-u_h|_{H^1}$ & eoc \\
\midrule
4   & 0.250 & 2.13e-02 & 0.00 & 3.19e-01 & 0.00 & 3.29e-01 & 0.00 & 3.30e-01 & 0.00 \\
8   & 0.125 & 7.70e-03 & 1.47 & 2.23e-01 & 0.52 & 2.26e-01 & 0.54 & 2.29e-01 & 0.53 \\
16  & 0.063 & 2.89e-03 & 1.41 & 1.62e-01 & 0.46 & 1.63e-01 & 0.47 & 1.67e-01 & 0.46 \\
32  & 0.031 & 1.13e-03 & 1.35 & 1.23e-01 & 0.39 & 1.24e-01 & 0.40 & 1.27e-01 & 0.39 \\
64  & 0.016 & 4.57e-04 & 1.31 & 9.80e-02 & 0.33 & 9.80e-02 & 0.34 & 1.00e-01 & 0.34 \\
128 & 0.008 & 1.88e-04 & 1.28 & 7.99e-02 & 0.30 & 7.98e-02 & 0.30 & 8.08e-02 & 0.31 \\
256 & 0.004 & 7.81e-05 & 1.27 & 6.60e-02 & 0.27 & 6.60e-02 & 0.27 & 6.62e-02 & 0.29 \\
512 & 0.002 & 3.26e-05 & 1.26 & 5.51e-02 & 0.26 & 5.51e-02 & 0.26 & 5.51e-02 & 0.28 \\
\bottomrule
\end{tabular}
\caption{Errors and order of convergence for different $\mu\in\{1,1000,10^5\}$.}
\label{tab:error_eoc_mu_comparison}
\end{table}
\vspace{-2em}

\section{Conclusions}
We proposed a novel transformation operator for an ODE that is related to a space-time 
FEM formulation of the wave equation, resulting in a Galerkin-Bubnov formulation that 
is unconditionally stable and coercive. We can theoretically prove stability and best approximation error estimates independent of $\mu$. This opens the door for a space-time transformation that leads to a coercive formulation. Related results will be published elsewhere. 

\section*{Acknowledgments} 
The research leading to this 
publication received funding from the Dutch Research 
Council (NWO) under the NWO-Talent Programme Veni with the project number 
VI.Veni.212.253.

\bibliographystyle{plain}

\end{document}